\renewcommand{\leq}{\leqslant}
\renewcommand{\le}{\leqslant}
\theoremstyle{plain}
\newtheorem{theorem}{Theorem}[section]
\newtheorem{lemma}[theorem]{Lemma}
\theoremstyle{definition}
\renewcommand{\leq}{\leqslant}  
\providecommand\Z{\mathbb{Z}} 
\providecommand\OO{\mathscr{O}}
\providecommand\M{\mathscr{M}}
\providecommand\sym{\mathcal{S}}
\providecommand\spc{\mathscr{T}}
\newcommand{\atp}{\mathop{\mathrm{atp}}}
\newcommand{\ptp}{\mathop{\mathrm{par}}}
\DeclareMathOperator{\AS}{AS}
\DeclareMathOperator{\AL}{AL}
\DeclareMathOperator{\RS}{RS}
\DeclareMathOperator{\RL}{RL}
\providecommand\MOLS[2]{\ensuremath{#1\textrm{-}\mathrm{MOLS}(#2)}}
\providecommand\maxMOLS[2]{\ensuremath{#1\textrm{-}\mathrm{maxMOLS}(#2)}}
\providecommand{\shdA}{\cellcolor[gray]{.8}}
\providecommand{\shdB}{\cellcolor[gray]{.5}}
\providecommand{\shdC}{\cellcolor[gray]{.7}}
\providecommand{\and}{\ensuremath{\wedge}}
\def\tref#1{Theorem~$\ref{#1}$}
\def\Tref#1{Table~$\ref{#1}$}
\def\lref#1{Lemma~$\ref{#1}$}
\def\eref#1{$(\ref{#1})$}
\def\sref#1{Section~$\ref{#1}$}
\def\spacer{{\vrule height 2.75ex width 0ex depth1.5ex}}
\def\tspacer{{\vrule height 2.25ex width 0ex depth0ex}}
\begin{document}
\title{Enumeration of MOLS of small order\footnote{2010 
AMS Subject Classification 05B15 (62K99).}
\footnote{Keywords: latin square; MOLS; transversal; plex; orthogonal mate 
}
}

\author{Judith Egan and Ian M. Wanless\\
\small School of Mathematical Sciences\\[-0.8ex]
\small Monash University\\[-0.8ex]
\small VIC 3800 Australia\\
\small \texttt{\{judith.egan, ian.wanless\}@monash.edu}
}

\date{}
\maketitle
\begin{abstract}
  We report the results of a computer investigation of sets of
  mutually orthogonal latin squares (MOLS) of small order.  For $n\le9$ we 
\begin{enumerate}\addtolength{\itemsep}{-0.25\baselineskip}
\item Determine the number of orthogonal mates for
  each species of latin square of order $n$.
\item Calculate the proportion of latin squares of order $n$ that have
  an orthogonal mate, and the expected number of mates when a square is
  chosen uniformly at random.
\item Classify all sets of MOLS of order $n$ up to various different
  notions of equivalence.
\end{enumerate}

We also provide a triple of latin squares of order 10 that is the closest
to being a set of MOLS so far found.
\end{abstract}



\section{Introduction}\label{s:intro}

A latin square of order $n$ is an $n \times n$ matrix in which $n$
distinct symbols are arranged so that each symbol occurs once in each
row and column.  Two latin squares $A = [a_{ij}]$ and $B = [b_{ij}]$
of order $n$ are said to be \emph{orthogonal} if the $n^2$ ordered
pairs $(a_{ij},b_{ij})$ are distinct.  A set of MOLS (\emph{mutually
orthogonal latin squares}) is a set of latin squares in which each
pair of latin squares is orthogonal. The primary aim of this paper is
a thorough computational study of all sets of MOLS composed of 
latin squares of order at most $9$.

We use \MOLS{k}{n} as shorthand for $k$ MOLS of order $n$.
If $A$ and $B$ are orthogonal then $B$ is an \emph{orthogonal mate} of
$A$, and vice versa.  A latin square with no orthogonal mate is called
a \emph{bachelor} square \cite{vanRees1990}.  A set of
\maxMOLS{k}{n} is a set of \MOLS{k}{n} that is {\em maximal} in the
sense that it is not
contained in any set of \MOLS{(k+1)}{n}. The existence problem
for \maxMOLS{1}{n} (i.e.~bachelor latin squares) was solved in
\cite{Evans,WW}. For the most recent progress on the existence of
\maxMOLS{2}{n}, see \cite{DanzigerWanlessWebb2011}.
For $k>2$ our knowledge is quite patchy; see \cite{cd07} for a summary.
However, for $1\le k<n \leq 9$, the question of whether or not
there exists a set of \maxMOLS{k}{n} is completely answered due to the
collective works of Drake \cite{Drake1977}, Jungnickel and Grams
\cite{JungnickelGrams1986}, and Drake and Myrvold
\cite{DrakeMyrvold2004}. 
In \cite[p190]{cd07} there is
a table of values of $k$ for which \maxMOLS{k}{n}
are known to exist for $n \leq 61$, but missing from it is the case
$(k,n)=(4,9)$ due to \cite{DrakeMyrvold2004}.

A \emph{transversal} in a latin square of order $n$ is a selection of
$n$ distinct entries in which each row, column and symbol has exactly
one representative.  A partition of a latin square of order $n$ into
$n$ disjoint transversals is called a \emph{$1$-partition}.  A latin
square has an orthogonal mate if and only if it possesses a
$1$-partition \cite[p155]{dk74}. More generally, a $p$-plex is a
selection of $pn$ distinct entries in which each row, column and
symbol has exactly $p$ representatives~\cite{w02}.  A partition of a
latin square into disjoint $p$-plexes is called a
\emph{$p$-partition}.  We discuss algorithms for finding $p$-plexes
and $p$-partitions in \sref{s:algo}.

For a latin square $L$ of order $n$, we define $\theta=\theta(L)$ to be the
number of $1$-partitions of $L$. Put another way, $\theta(L)$ is the
number of orthogonal mates of $L$ that have their first row equal to
$[1,2,\dots,n]$.  The number of transversals in a latin square is well
known to be a species invariant and it follows from the same reasoning
that $\theta(L)$ is also a species invariant. The definition
of species (also known as ``main class'') will be given in 
\sref{s:equiv}.

The results in \cite{ew2011lswrt} include a classification of the
species of orders up to $9$ by whether or not they possess an
orthogonal mate (see \cite{w02} for an earlier table, giving similar
data for orders up to $8$).  In \sref{s:nummates} we report data on
the number of mates for all species of orders up to $9$.  We then
calculate the expected value of $\theta(L)$ for $L$ selected uniformly
at random from the latin squares of order $9$.  For orders $n \leq 8$
this information is available in \cite{ew2011indiv}.
 
A set of \MOLS{(n-1)}{n} is also known as a \emph{complete} set of
MOLS and is equivalent to an affine plane of order~$n$
\cite{Bose1938}.  In 1896, Moore showed that the maximum cardinality
of a set of MOLS of order~$n$ is $n-1$, and that this upper bound is
achieved if $n$ is a prime power \cite{Moore1896}.  The converse,
whether \MOLS{(n-1)}{n} exist only if $n$ is a prime power, is a
prominent open problem.  Further information and partial results can
be found in \cite{cd07,dk74,lm98}.  We use the name \emph{planar latin
  square} for any latin square that is a member of some set of
\MOLS{(n-1)}{n}.  We will refer to the species of planar latin squares
of order 9 by the labels $a,b,\dots,k$ given to them by Owens and
Preece \cite{OP95}. In \sref{s:MaxMOLS} we will investigate
the role that these squares play in forming sets of maxMOLS that are
not complete.

For excellent general references on enumeration problems
of the type we undertake, see \cite{KO06,Ost05}.
For recent related work on enumerating mutually orthogonal latin cubes,
see \cite{KO14}.

The outline of the paper is as follows. In \sref{s:equiv} we define
our basic terminology and establish the different notions of equivalence
that we want to use when counting MOLS. In \sref{s:algo} we describe
the basic algorithms that we used for counting MOLS, as well as providing
the mathematical theory that underpins those algorithms. The case
of \MOLS{2}{9} is the most difficult that we treat and it requires some
special considerations that are described in \sref{s:pairs9}.
In the process we give our first data from the computations, which
is a classification of the \MOLS{2}{9} according to how many
symmetries they possess. In \sref{s:nummates} we provide data on how many
orthogonal mates each latin square of order up to 9 possesses and
identify the squares with the most mates. We also calculate the
probability that a random latin square will have a mate and the 
expected number of mates. The main data is provided in \sref{s:MaxMOLS},
where we provide counts of MOLS and maxMOLS classified according to
the many notions of equivalence defined in \sref{s:equiv}. We also
provide information on many other matters such as the number of disjoint
common transversals, which species of latin squares are most prevalent
in the sets of MOLS, how many MOLS contain planar latin squares and so on.
In \sref{s:safety} we describe a number of ways in which we have crosschecked
our data in order to reduce the chances of errors. Finally, in \sref{s:ten}
we give three latin squares of order 10 that are closer to being
a set of \MOLS{3}{10} than any previously published.

There are numerous tables in this paper which report counts of
different types of MOLS. In every table we use the convention that a
blank entry should be interpreted as zero, meaning there are
definitely no MOLS in that category.
    
\section{Symmetries and notions of equivalence}\label{s:equiv}

The number of latin squares of order $n$ grows rapidly as $n$ grows
and is only known \cite{MW05,HKO11} for $n\le11$. Little is known
about the number of sets of MOLS, although it is clear that it too
increases very quickly \cite{DG13}.  To cope with this ``combinatorial
explosion'' it is vital to use a notion of equivalence to classify the
different possibilities.  Several different notions of equivalence are
outlined in this section.  We used the weakest notion (that is, the
one that considers the most things equivalent) in the first instance
to compile a list of representatives from equivalence classes.  From
these we can then infer the number of equivalence classes using
stronger notions of equivalence. With this strategy, the
computational limit is the MOLS of order~$9$.

Taking care in our enumeration, we will sometimes need to distinguish
between sets of MOLS and lists of MOLS (a list is an ordered set). The
distinction will become important shortly. The definitions below are
intended to apply to any number of MOLS, including the (arguably
degenerate) case of MOLS that consist of a single latin square.
In that case, of course, sets and lists are the same thing.

To introduce our various notions of equivalence, it is useful to
discuss a well-known relationship between MOLS and orthogonal
arrays.  Let $S$ be a set of cardinality $s$ and let $O$ be an $s^2\times
k$ array of symbols chosen from $S$.  If, for any pair of columns of
$O$, the ordered pairs in $S \times S$ each occur exactly once among
the rows in those chosen columns, then $O$ is an \emph{orthogonal array}
of strength 2 and index 1. We will omit further reference
to the strength and index, since we will not need orthogonal arrays
with other values of these parameters.  See \cite{cd07} for further
details and background on orthogonal arrays.

A list $L_1,L_2,\ldots,L_k$ of MOLS of order $n$ can be used to
build an $n^2\times(k+2)$ orthogonal array as follows.  For each row
$r$ and column $c$ of the latin squares there is one row of the 
orthogonal array equal to
\begin{align*}
\left(
		r,\ 
		c,\ 
		L_1[r,c],\ 
		L_2[r,c],\ 
		\ldots,\ 
		L_k[r,c]
\right),
\end{align*}
where $L_i[r,c]$ is the symbol in row $r$, column $c$ of the square
$L_i$.  Moreover, the process is reversible, so that any
$n^2\times(k+2)$ orthogonal array can be interpreted as a list of $k$
MOLS of order $n$.  In other words, orthogonal arrays 
correspond to lists of MOLS (the correspondence is not one-to-one,
but only because permuting the rows of the orthogonal array changes
the array but does not affect the MOLS).  We will sometimes talk of an
orthogonal array representing a set of MOLS. In such cases we will
mean that any order can be imposed on the set to make it a list,
and it makes no material difference which order is chosen.

We call two orthogonal arrays of the same dimensions {\em equivalent}
if they are the same up to permutation of the rows and columns of the
array and permutations of the symbols within each column of the array.
We define two lists of MOLS to be \emph{paratopic} if they define
equivalent orthogonal arrays in this sense.

Let $\sym_n$ denote the symmetric group of degree $n$.
Viewed another way, paratopism is an action of the wreath product
$\sym_n\wr\sym_{k+2}$ on lists of \MOLS{k}{n}, where each copy of
$\sym_n$ permutes the symbols in one of the columns of the
corresponding orthogonal array, while $\sym_{k+2}$ permutes the
columns themselves.  An orbit under paratopism is known as a {\em
  species} of MOLS.  The stabiliser of a list of MOLS $M$ under
paratopism will be called its {\em autoparatopism group}, which we
denote by $\ptp(M)$.  We say that a group is {\em trivial} if it has
order 1, and {\em non-trivial} otherwise.  Lists of MOLS that have
trivial autoparatopism group are {\em rigid}, all other MOLS will be
called {\em symmetric}.

We call two lists of MOLS {\em isotopic} if they define the same
orthogonal array, up to permutation of the symbols within each column
of the array and permutation of the rows of the array. In latin
squares terminology, we are allowing uniform permutation of rows and
columns of the squares as well as permutation of the symbols within
each square.  We call two lists of MOLS {\em trisotopic} if they are
isotopic, or if swapping the first two columns of the orthogonal array
for one of the lists makes it isotopic to the other. In latin squares
terminology, trisotopism is the same as isotopism except that we also
allow the squares to be transposed in the usual matrix sense.

Isotopism can be viewed as an action of the direct product of $k+2$
copies of $\sym_n$ on lists of \MOLS{k}{n}.  The stabiliser of a list
$M$ of MOLS under isotopism is known as the {\em autotopism group} of
$M$, which is denoted $\atp(M)$.  The orbit of $M$ under isotopism is
known as its isotopism class -- it is the set of all lists of MOLS that
are isotopic to $M$.  Similarly, the trisotopism class of $M$ is the set
of all lists of MOLS that are trisotopic to $M$.

We call two sets of MOLS isotopic (respectively, trisotopic,
paratopic) if there is any way in which they can be ordered so that
the resulting lists of MOLS are isotopic (respectively, trisotopic,
paratopic). Again, we define the isotopism class (respectively,
trisotopism class, species) of a set $M$ of MOLS to be the set of all
sets of MOLS that are isotopic (respectively, trisotopic, paratopic)
to $M$.  We will frequently discuss species of MOLS without specifying
whether the MOLS are lists or sets. This is appropriate since species of
lists of MOLS correspond one-to-one to species of sets of MOLS, simply
by ``forgetting'' the order of the lists.  Similar statements fail for
isotopism classes and trisotopism classes -- there are typically more of
these for lists of MOLS than for the corresponding sets of MOLS.

For complete sets/lists of MOLS there are also geometric notions of
equivalence.  We define two complete sets/lists of MOLS to be {\em
  PP-equivalent} if they correspond to isomorphic projective planes,
and to be {\em DPP-equivalent} if the projective planes they define
are, up to isomorphism, either equal or dual.

The strongest possible notion of equivalence for MOLS is equality,
when considered as lists or sets. A list of MOLS is
{\em reduced} if all squares in the set have their first row in order
and the first square has its first column in order. A set of MOLS is
{\em reduced} if an ordering can be put on it to make it a reduced
list of MOLS.

It should be clear from the definitions that equality is a refinement of
isotopism equivalence which in turn is a refinement of trisotopism
equivalence which in turn is a refinement of paratopism equivalence. For
complete sets of MOLS, paratopism equivalence is a refinement of
PP-equivalence which is a refinement of DPP-equivalence.  The
relationship between projective planes, MOLS and different notions of
equivalence was studied by Owens and Preece
\cite{Owens1992,OP95,OP97}.  Our enumerations
confirm and extend a number of their results.

Some of our terminology follows the pioneers of the subject, such as
Norton \cite{Nor39}, who used ``species'' in our sense for single
latin squares and also for larger sets of MOLS. Another term that we
want to borrow from \cite{Nor39} is the notion of an {\em aspect}.  An
{\em aspect} of a list or set of MOLS is obtained by selecting 3
columns of the corresponding orthogonal array, then interpreting the
result as a latin square. In our work we will only care about which
species each aspect is in, so we will talk of there being
${k+2\choose3}$ aspects for a set or list of $k$ MOLS. In other words,
aspects will be considered to be the same if they use the same 
3 columns of the orthogonal array, but in a different order.

The orthogonal array interpretation of a set of MOLS provides an easy
mechanism for converting any set of $k$-MOLS containing a particular
latin square $L$ into another set of $k$-MOLS that contains $L'$,
where $L'$ is any latin square in the same species as $L$. What is not
so obvious is that the conversion may change the species of some or
all of the $k-1$ latin squares in the sets of MOLS other than
$L$. Variation of the species of latin squares among paratopic sets of
MOLS was observed by Owens and Preece \cite{OP95,OP97} in their study
of complete sets of MOLS of order $9$ obtained from affine planes of
order $9$.  See also \cite{MaenhautWanless2004} for an explicit
example.  However, with that caveat, to enumerate species of MOLS it
is sufficient to start with a set of representatives of species of
latin squares and find the sets of MOLS that they are contained in.
The details of how we did this will be discussed in \sref{s:algo}.

Suppose that $M$ is a set of \MOLS{k}{n} and $O$ is the
corresponding orthogonal array.  A {\em common transversal} for $M$ is
a selection of $n$ of the rows of $O$ in which no two rows share
the same symbol in any column. In other words, in the $n\times(k+2)$
subarray of $O$ formed by the chosen rows, each column is a
permutation of the $n$ symbols in $O$. A particularly important
consideration is whether $O$ can be partitioned into subarrays of this
type.  The set $M$ of \MOLS{k}{n} has a set of $n$ disjoint common
transversals if and only if $M$ is a subset of some set of \MOLS{(k+1)}{n},
in other words, it is not maximal.



We finish the section with an example that illustrates why we need to
carefully distinguish between sets and lists when enumerating MOLS.
All calculations in this example will be in $\Z_5$.
Define $L_x$ to be the latin square of order $5$ whose entry in
cell $(i,j)$ is $xi+j$. It is easy to see that $L_1$, $L_2$, $L_3$
and $L_4$ are mutually orthogonal. However, the list $(L_1,L_4)$ is
isotopic to $(L_4,L_1)$, by applying the permutation $x\mapsto 4x$
to the rows of both squares. In contrast, $(L_1,L_2)$ is not isotopic,
as a list, to $(L_2,L_1)$, even though the corresponding sets are
clearly equal. Hence, the sets of MOLS $\{L_1,L_2\}$ and $\{L_1,L_4\}$
correspond to a total of three different lists, up to isotopism.
All three lists are in the same species.

This example illustrates an interesting point regarding autotopism
groups. We have been careful to define $\atp$ (and $\ptp$) only for
lists of MOLS, where the group actions that we have described are
well-defined. It is tempting to define autotopisms of sets of MOLS by
considering the autotopisms of a corresponding list of MOLS.  If we do
this in the above example, the set $\{L_1,L_4\}$ seems to have twice
as many autotopisms as $\{L_1,L_2\}$, since there are the autotopisms
that preserve the list $(L_1,L_4)$, as well as those that map $L_1$ to
$L_4$ and vice versa. This would mean that the number of autotopisms
is not a species invariant for sets of MOLS. In any case, we do not
need a notion of an autotopism group for sets of MOLS in this work.

\section{Basic Algorithms}\label{s:algo}

In this section we discuss the algorithms that we used for enumerating
the MOLS of a given order and testing them for equivalence.  The first
task was to obtain a set of species representatives for the MOLS. Next
we used these species representatives to count the isotopism classes and
trisotopism classes for sets and lists of MOLS. Lastly, we calculated
the number of reduced MOLS using two theorems that we prove 
at the end of this section. The techniques described in this section
were feasible in most cases. The case $(k,n)=(2,9)$ required
some additional considerations, which are described in \sref{s:pairs9}.

We began with a set of species representatives for latin squares of
order $n$. For $n\le8$ these are available in many places, including
\cite{BDMLS}. For order 9 there are too many to store, so 
we generated the species representatives 
``on the fly'', using a program written for \cite{MMM07}. 
Our first task reduces to the problem of finding a set of species
representatives for sets of \MOLS{(k+1)}{n} given a set of species
representatives for sets of \MOLS{k}{n}. This requires us to find all
possible $1$-partitions of each \MOLS{k}{n} in turn.  Except when
$(n,k+1)=(9,2)$, the resulting number of \MOLS{(k+1)}{n} was small
enough to screen for isomorphism, in a way that we describe below, 
to select the required set of species representatives.



In \cite{ew2011indiv} we conducted an exhaustive study of the
indivisible partitions of latin squares of orders~$\le8$. The algorithm
used for finding partitions in that study included the special case
of 1-partitions. Since it is almost as simple to describe how to find
$p$-partitions for a general $p$, we describe this more general
algorithm now.


The first step was to generate and store all of the $p$-plexes.  This
was possible for the cases encountered in \cite{ew2011indiv} and in
the present work, but for most larger squares the number of $p$-plexes
would be too large to store. To generate all $p$-plexes we used a
simple backtracking algorithm, aided by bit-arithmetic. Given a list
$L_1,\dots,L_k$ of \MOLS{k}{n} we first computed an $n\times n$ array
$U$ of bitstrings.  The entry in cell $(r,c)$ of $U$ was
$2^c+\sum_{i=1}^k2^{in+L_i[r,c]}$, using $\{0,\dots,n-1\}$ to index
rows, columns and symbols. The backtracking worked row by row,
adding to our plex all allowable choices of $p$ cells from a row.  To
keep track of what is allowable, we maintained one bitstring for each
$i=1,2,\dots,p$ which recorded which symbols from each latin square
and/or columns were already represented $i$ times in our plex. These
bitstrings were updated using the matrix $U$.  Each plex that was
found was stored as a bitstring of $n^2$ bits saying which cells were
included in the plex. This allowed rapid pairwise comparison to see if
two plexes were disjoint, or similarly, to check if one plex was a
subset of another. The latter question was vital when testing
divisibility of plexes in \cite{ew2011indiv}, but is not so important
to us here.

In the process of generating the plexes, we also computed a look-up
table $T$ which recorded, for each plex $P$ and row $r$, the index of
the plex $T[P,r]$ which was the first plex in the catalogue after $P$
whose cells in row $r$ were different (in at least one place) 
from the ones used in $P$.  This
look-up table greatly sped up the second stage, which was the finding
of all $p$-partitions. Here again we used backtracking.  We built each
$p$-partition one $p$-plex at a time. However, if we found that a
particular plex $P$ could not be added to our partition, then we
located the first row $r$ in which $P$ intersected with the plexes already
chosen, then skipped forward in the
catalogue to $T[P,r]$, the next plex that might have a chance of being
compatible. As an example, consider the process of choosing
transversals to make a $1$-partition. We end up choosing the
transversals in order of which cell they use in the first
row. Skipping forward using $T$ is one way to ensure that we do
not waste time considering transversals that clash in the first row
with a transversal that we have already chosen. Note that if we are
only interested in finding $1$-partitions then we may enforce that the
$i^{\rm th}$ transversal that we choose uses the $i^{\rm th}$ cell in
the first row. However, if we are looking for the largest number of
disjoint transversals then we may only assume that each transversal
that we choose uses a cell in the first row to the right of that used
by the previous transversal.

With the above algorithm we were able to find all 1-partitions of a
set of MOLS. In particular, of course, if there are no 1-partitions
then the MOLS are maximal. It is worth making some comments on an
alternative approach to finding 1-partitions. Finding all the
transversals in a latin square can be viewed as an instance of the
\texttt{exact cover} problem \cite{KO06,Ost05}.  Once the transversals
have been generated and stored, finding all the 1-partitions is
another instance of \texttt{exact cover}.  A solver for \texttt{exact
  cover}, called \texttt{libexact}, is available at \cite{KP14}. It
uses what \cite[p.149]{KO06} describes as ``an algorithm that lacks
serious competitors''.  However, as often happens, we were able to
beat the general purpose algorithm by exploiting the particularities
of our setting. We found that our algorithm was
faster than \texttt{libexact} by a factor of 2 for average latin
squares of order 9, and faster by a factor of 7 for the latin squares
with the most orthogonal mates (the group tables).  The time taken to
find the transversals was negligible compared to the time taken to
find the 1-partitions. On a standard desktop PC, our code took roughly
17 seconds to find the $12445836$ mates for the elementary abelian
group of order 9, but could process over 1200 typical latin squares of
order 9 per second.

\bigskip

Next we discuss the issue of equivalence testing for MOLS. For this
task we used \texttt{nauty} \cite{nauty} to canonically label our MOLS, which
could then be compared pairwise to see if their canonical forms were
equal. This is a standard way to employ \texttt{nauty}, but we needed
to encode our MOLS as a graph so that \texttt{nauty} could be applied. This
is easiest to describe by considering the orthogonal array
representation for the MOLS.

Suppose that we have an $n^2\times k$ orthogonal array $O$
corresponding to a list of MOLS $M$. We now define an undirected graph
$G_O$ corresponding to $O$. The vertices of $G_O$ are of three
types. There are:
\begin{itemize}\addtolength{\itemsep}{-0.25\baselineskip}
\item
$k$ type 1 vertices that correspond to the columns of $O$,
\item
$kn$ type 2 vertices that correspond to the symbols in each of
the columns of $O$, and
\item
$n^2$ type 3 vertices that correspond to the rows of $O$.
\end{itemize}
Each type 1 vertex is joined to the $n$ type 2 vertices that
correspond to the symbols in its column. Each type 3 vertex is
connected to the $k$ type 2 vertices that correspond to the symbols
in its row. There are no more edges. 
Vertices are coloured according to their type so that isomorphisms
are not allowed to change the type of a vertex. It is now routine
to check the following key facts (that generalise 
observations from \cite{MMM07}, which dealt with the case $k=3$):
\begin{itemize}\addtolength{\itemsep}{-0.25\baselineskip}
\item  the automorphism group of $G_O$ is isomorphic to $\ptp(M)$.

\item If $G_{O'}$ is the graph corresponding to another orthogonal
  array $O'$ then $G_O$ is isomorphic to $G_{O'}$ if and only if $O$
  is paratopic to $O'$.
\end{itemize}
This shows how we tested paratopism (of sets or lists) of MOLS.
Moreover, we can test the other equivalence relations we need 
by altering the colouring of the type 1 vertices.  Suppose that the
first two type 1 vertices correspond to the rows and columns of the
latin squares respectively. Then to test isotopism of lists of MOLS we
give each type 1 vertex a different colour.  To test isotopism of sets
of MOLS we give the first two type 1 vertices different colours, then
all remaining type 1 vertices are given a third colour.  In both cases,
trisotopism is the same as isotopism except that the first two type 1
vertices get the same colour. Since \texttt{nauty} looks only for colour
preserving isomorphisms, this allowed us to test the different notions
of equivalence that we needed.  We could simply take each species
representative, reorder the columns of their orthogonal array in all
ways that might plausibly be inequivalent, then test with \texttt{nauty} which
ones were in fact inequivalent.

One other point bears mentioning, which is that \texttt{nauty} can be
dramatically quickened by use of vertex invariants \cite{nauty}.  We
trialed several invariants of which the fastest was \texttt{cellfano2},
which is one of the invariants that ships in the current distribution
of \texttt{nauty}.

\medskip

By using \texttt{nauty} as described above, we were able to compile
catalogues of representatives for species, trisotopism classes and
isotopism classes of MOLS for all cases except when $(k,n)=(2,9)$. We
did not do any computations of PP or DPP equivalence, since
classification of sets of MOLS under those notions is well known
\cite{cd07} for orders up to and including $9$.  So it only remains to
discuss how we counted reduced MOLS. For this we employed the
following theorems.  In the next result, a {\em class} of MOLS should be
interpreted as containing both lists and sets of MOLS, with a list
being a member of the class if and only if the corresponding set is in
the class.

\begin{theorem}\label{t:switch}
  Suppose $1\le k<n$. Let $\M$ be any class of \MOLS{k}{n} that is
  closed under isotopisms.
  Let $\RS_{\M}$ and $\RL_{\M}$ be the number of reduced sets and
  reduced lists, respectively, in $\M$. Let $\AS_{\M}$ and
  $\AL_{\M}$ be the corresponding numbers of arbitrary $($that is, not
  necessarily reduced\/$)$ sets and lists. These numbers are related by
\[
(k-1)!n!^k(n-1)!\RS_{\M}
=n!^k(n-1)!\RL_{\M}
=\AL_{\M}=k!\AS_{\M}.
\]
\end{theorem}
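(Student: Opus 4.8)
The plan is to establish the three equalities in the chain separately, working from right to left. The rightmost equality $\AL_\M=k!\AS_\M$ is bookkeeping. Because pairwise orthogonal squares are pairwise distinct (two equal squares of order $n\ge2$ are never orthogonal, and here $n>k\ge1$), every set of MOLS in $\M$ underlies exactly $k!$ distinct lists; since $\M$ is a class, all $k!$ of these lie in $\M$ iff the set does, and conversely each list in $\M$ has a unique underlying set in $\M$. Hence $\AL_\M=k!\AS_\M$.

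For $\AL_\M=n!^k(n-1)!\,\RL_\M$ I would double count. Isotopism acts on lists of $k$ MOLS of order $n$ as the group $G=\sym_n^{k+2}$ of order $n!^{k+2}$: one factor permutes rows, one permutes columns, and $k$ factors permute the symbols of the individual squares. Let $P$ be the set of pairs $(M,\phi)$ with $M$ a list in $\M$, $\phi\in G$, and $\phi(M)$ reduced. Counting $P$ through its second coordinate is immediate: for each reduced list $M'\in\M$ (automatically in $\M$, as $\M$ is isotopism-closed) and each $\phi\in G$ there is a unique $M=\phi^{-1}(M')$, which again lies in $\M$, so $|P|=\RL_\M\cdot n!^{k+2}$. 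The crux is to count $P$ through its first coordinate, which reduces to the claim that \emph{every} list $M$ of $k$ MOLS admits exactly $n\cdot n!$ isotopisms $\phi$ with $\phi(M)$ reduced. To prove the claim, observe that a reducing $\phi$ is determined by two free choices: which row of $M$ becomes the new first row ($n$ options) and which permutation is applied to the columns ($n!$ options). Once these are fixed, the symbol permutation on each square is forced by the demand that its first row equal $(1,\dots,n)$, and then the row permutation is forced by the demand that the first square have first column $(1,\dots,n)$; one checks this reconstruction is self-consistent (the nominated new-first-row index is indeed fixed by the forced row permutation) and that distinct choices yield distinct isotopisms (they use different column permutations, or, for a fixed column permutation, different symbol permutations on the first square, since distinct rows of a latin square are distinct permutations). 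Granting the claim, $|P|=\AL_\M\cdot n\cdot n!$, and comparing the two counts gives $\AL_\M=\RL_\M\cdot n!^{k+2}/(n\cdot n!)=n!^k(n-1)!\,\RL_\M$.

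For the leftmost equality $(k-1)!\,\RS_\M=\RL_\M$, I would first note that in any reduced list exactly one square has its first column in natural order, namely the first. All squares of a reduced list share the first row $(1,\dots,n)$, so any two of them already display all $n$ ``diagonal'' pairs $(1,1),\dots,(n,n)$ in that first row; orthogonality then forces every entry in rows $2,\dots,n$ to give an off-diagonal pair, so at most one of the two can carry $r$ in cell $(r,1)$ for every $r$. Hence a reduced set $S\in\M$ singles out a unique element $M^{\ast}\in S$ whose first column is in natural order (at most one by the observation, at least one since some ordering of $S$ is a reduced list), while every element of $S$ has its first row in order; the reduced lists with underlying set $S$ are therefore precisely the $(k-1)!$ orderings of $S$ beginning with $M^{\ast}$, all of which lie in $\M$. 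Thus $\RL_\M=(k-1)!\,\RS_\M$, and multiplying by $n!^k(n-1)!$ and chaining the three identities completes the proof.

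I expect the main obstacle to be the constancy statement in the second step: verifying that the reconstruction of a reducing isotopism from the two free choices is well-defined and consistent for an \emph{arbitrary} list of MOLS (with no structural assumptions), so that the count $n\cdot n!$ is genuinely independent of $M$. The other two equalities are essentially definitional once the ``unique first square'' observation is in hand.
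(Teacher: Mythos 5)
Your proposal is correct and follows essentially the same route as the paper: the same three-way decomposition, the same double count of pairs $(M,\phi)$ (which is exactly the paper's edge count in a bi-regular bipartite multigraph between reduced and arbitrary lists, with degrees $n!^{k+2}$ and $n\cdot n!$), and the same observation that orthogonality forces a reduced set to contain a unique square with natural first column. You merely spell out two details the paper leaves implicit — the orthogonality argument for that uniqueness and the self-consistency of reconstructing a reducing isotopism from its two free choices — both of which check out.
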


\begin{proof}
Since orthogonal latin squares cannot be equal, the last equality is
immediate. The first equality is similar, given that any reduced set
or list of MOLS contains a unique reduced latin square. 

To prove the middle inequality, we construct a bipartite multigraph
where the two vertex parts are, respectively, the reduced and arbitrary 
lists of \MOLS{k}{n}. Let $R$ be any reduced list of \MOLS{k}{n}. 
We add one edge from $R$ for every isotopism, with the other end of
the edge being the list that results from applying the isotopism
to $R$. Hence the degree of $R$ will be $n!^{k+2}$, the number of
possible isotopisms.

Now consider $A$, an arbitrary list of \MOLS{k}{n}. The degree of $A$
will be the number of isotopisms that can be applied to $A$ to produce
a reduced list of MOLS.  Such an isotopism is determined by the
permutation it applies to the columns of the squares in $A$, and which
row becomes the first row. Once these choices are made, there is a
unique way to permute the symbols in each square to get the first row
in order and a unique way to permute the remaining rows to get the
first column of the first square in order.  Hence there are $n!\,n$
possible choices, and each produces exactly one reduced list. In other
words, the degree of $A$ is $n!\,n$.  Thus our multigraph is
bi-regular, so the sizes of the two vertex parts are in the opposite
ratio to the degrees of the vertices in those parts, yielding the
claimed equality.
\end{proof}

\tref{t:switch} deals with classes of \MOLS{k}{n} that are closed under
isotopisms. An important example is the class of all \MOLS{k}{n}.
In that case we will write $\RS_{k,n}$, $\RL_{k,n}$, $\AL_{k,n}$, $\AS_{k,n}$
instead of $\RS_{\M}$, $\RL_{\M}$, $\AL_{\M}$, $\AS_{\M}$, respectively.


\begin{theorem}\label{t:RSfromreps}
  Suppose $1\le k<n$. 
  Let $\OO$ be a set of sets of \MOLS{k}{n} such that no two elements of
  $\OO$ are paratopic. The number of reduced sets of \MOLS{k}{n}
  that are paratopic to some member of $\OO$ is
  \[
  n!\,n(k+2)(k+1)k\sum_{M\in\OO}\frac{1}{|\ptp(M)|}.
  \]
\end{theorem}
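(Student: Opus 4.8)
The plan is to count, in two ways, a suitable bipartite incidence structure relating species representatives in $\OO$ to reduced sets of MOLS that are paratopic to them. First I would observe that the species of sets of MOLS partition according to the members of $\OO$: a reduced set $R$ paratopic to some member of $\OO$ is paratopic to exactly one $M\in\OO$, since no two elements of $\OO$ are paratopic. So it suffices to fix a single $M\in\OO$ and count the reduced sets of MOLS in the species of $M$, then sum over $\OO$.

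For a fixed $M$, I would set up a bipartite multigraph whose left vertex is (a single vertex standing for) the full paratopism orbit considered via the group, and whose right vertices are the reduced sets of MOLS paratopic to $M$; more concretely, it is cleaner to count pairs $(\sigma, R)$ where $\sigma$ ranges over the paratopism group $\sym_n\wr\sym_{k+2}$ acting on the list-of-MOLS interpretation and $R$ is the \emph{reduced set} obtained from $\sigma\cdot M$. Counting this set of pairs one way: each $\sigma$ produces exactly one reduced set (given a list of MOLS, one reduces it by choosing the first row, permuting symbols in each square to put the first row in order, permuting the other rows to fix the first column of the first square — and then forgetting the order of the list), so the number of pairs is $|\sym_n\wr\sym_{k+2}| = n!^{k+2}(k+2)!$ divided by nothing — wait, each $\sigma$ gives one reduced \emph{list}, hence one reduced \emph{set}, so the count is simply $|\sym_n\wr\sym_{k+2}|=n!^{k+2}(k+2)!$ pairs, but several $\sigma$ can give the same pair. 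Counting the other way: for a fixed reduced set $R$ paratopic to $M$, the number of $\sigma\in\sym_n\wr\sym_{k+2}$ with $\sigma\cdot M$ reducing to $R$ equals (number of $\sigma$ sending $M$ to any fixed reduced list underlying $R$) times (number of reduced lists underlying $R$), and the first factor is $|\ptp(M)|$ by the orbit–stabiliser theorem applied to the paratopism action, while I must also account for the freedom \emph{within} the reduction map.

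So the heart of the argument is a careful bookkeeping of the reduction map on lists: given a list of MOLS, how many isotopisms (not paratopisms) bring it to reduced form, and how many reduced lists correspond to one reduced set. From \tref{t:switch}'s proof, the number of isotopisms reducing a given list is $n!\,n$; and a reduced set of \MOLS{k}{n} corresponds to $k!$ reduced lists? No — a reduced set has its unique reduced square, but the remaining $k-1$ squares can be ordered freely while staying reduced, so there are $(k-1)!$ reduced lists per reduced set. Meanwhile the paratopism group has an extra $(k+2)!$ column-permutation factor over the isotopism group, of which $(k+2)(k+1)k$ counts the ways of choosing which three of the $k+2$ orthogonal-array columns play the roles of row, column, and first-square (the remaining $k-1$ columns being the other squares, whose order is then absorbed into the $(k-1)!$ reduced-list count). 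Assembling: $n!^{k+2}(k+2)!$ pairs, each reduced set $R$ contributing $|\ptp(M)|\cdot(\text{internal multiplicity})$ pairs, and solving for the number of reduced sets yields $n!\,n(k+2)(k+1)k\,|\ptp(M)|^{-1}$; summing over $M\in\OO$ gives the claim. The main obstacle — and the step I would write out most carefully — is disentangling these multiplicative factors without double-counting: precisely matching the $(k+2)!/( (k-1)!) = (k+2)(k+1)k$ column-role choices against the $(k-1)!$ list orderings of a reduced set and the $n!\,n$ isotopic reductions, so that the group-theoretic orbit–stabiliser count lands exactly on the stated formula. A clean way to manage this is to factor the paratopism action as (choice of 3 special columns) $\times$ (isotopism action on the resulting list of $k$ MOLS) and invoke \tref{t:switch} for the isotopism part.
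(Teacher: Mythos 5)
Your proposal is correct and, once the muddled pairs-counting detour is discarded in favour of the ``clean way'' in your final sentence, it is essentially the paper's own proof: the Orbit--Stabiliser Theorem gives $n!^{k+2}(k+2)!/|\ptp(M)|$ lists of MOLS paratopic to $M$, and dividing by the factor $(k-1)!\,n!^k(n-1)!$ from \tref{t:switch} converts lists to reduced sets, which is exactly $n!\,n(k+2)(k+1)k/|\ptp(M)|$. Do note that your intermediate claim that each $\sigma$ produces exactly one reduced set is false (a list admits $n!\,n$ reductions, generally yielding distinct reduced lists and sets), but your final route does not rely on it.
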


\begin{proof}
It suffices to prove the case when $\OO$ contains a single set $M$ of
$\MOLS{k}{n}$. Let $A$ be an orthogonal array representation of $M$.
By the Orbit-Stabiliser Theorem,
the number of orthogonal arrays equivalent to $A$ is 
\begin{equation}\label{e:ptp}
\frac{|\sym_n\wr\sym_{k+2}|}{|\ptp(M)|}=\frac{n!^{k+2}(k+2)!}{|\ptp(M)|}.
\end{equation}
The number of reduced sets of \MOLS{k}{n} paratopic to $M$ is
obtained by dividing \eref{e:ptp} by $n!^k(n-1)!(k-1)!$, by 
\tref{t:switch}.
\end{proof}

In particular, \tref{t:RSfromreps} can be used to find $\RS_{k,n}$
from a set of species representatives for sets of \MOLS{k}{n}, using
\texttt{nauty} to find $|\ptp(M)|$ for each representative $M$. We can then
employ \tref{t:switch} to find $\RL_{k,n}$, $\AL_{k,n}$ and
$\AS_{k,n}$.

\section{Pairs of MOLS of order 9}\label{s:pairs9}

In this section we explain the most difficult part of our
computations, namely finding the number of pairs of MOLS of order $9$
modulo each of the equivalences defined in \sref{s:equiv}.
Throughout this section, MOLS will mean an ordered pair (list) of
reduced MOLS, and all latin squares will have order 9.

Each pair of MOLS has exactly four aspects.  We use the notation
$P[i]$ to denote the aspect that results from deleting the $i^{\rm
  th}$ column of the orthogonal array corresponding to a pair $P$ of
MOLS.


Unsurprisingly, symmetry plays a crucial role in our counting. For
this reason, one task was to find sets $\Gamma_1$ and $\Gamma_2$ of
species representatives for the symmetric latin squares and symmetric
MOLS, respectively (we stress that symmetric is used throughout in
the sense defined in \sref{s:equiv}, not in the usual matrix
sense). The authors of \cite{MMM07} collated $\Gamma_1$, which
contains 2523159 latin squares. We will explain below how we found
$\Gamma_2$, and then analysed it to deduce data on the rigid MOLS from
the overall totals.

Let $\Lambda$ be a set of species representatives of reduced latin squares 
of order $9$.  
Let $\Omega$ be the set of all pairs $(A,B)$ of reduced MOLS for which
$A\in\Lambda$. Using the method discussed in \sref{s:algo}, we generated
$\Omega$ and found that 
\begin{equation}\label{e:sizeOmega}
|\Omega|=\sum_{A\in\Lambda}\theta(A)=390255632.
\end{equation}
We did not store all of $\Omega$ but kept statistics from the
generation as well as a list (which will be defined shortly) of
candidates for members of $\Gamma_2$. In any MOLS with a non-trivial
autotopism group both latin squares have a non-trivial autotopism
group and hence are paratopic to a member of $\Gamma_1$.  Such MOLS
are relatively easy to generate directly from $\Gamma_1$.  Hence,
while generating $\Omega$, we only needed to find all MOLS that have
an autoparatopism that is not an autotopism. Such MOLS necessarily
have two paratopic aspects. Rather than the relatively time-consuming
task of calculating the autoparatopism group of each set of MOLS in
$\Omega$ we computed two species invariants for each aspect. First we
calculated the number of intercalates, and if that did not
discriminate between the aspects, we counted the number of
transversals. If any two of the four aspects agreed on both statistics
then we stored the MOLS as a candidate for being in $\Gamma_2$. These
candidates, together with the 25382851 MOLS $(A,B)$ for which
$A\in\Gamma_1$, were subsequently screened to produce $\Gamma_2$. As it
turned out, $|\Gamma_2|=257442$. A by-product of our method for finding
$\Gamma_2$ is that we were also able to identify MOLS with two paratopic
aspects even if there was no symmetry that mapped one to the other.
Data on this issue will be presented in \Tref{tbl:numspeciesinv}.


\begin{table}[htbp]
\begin{center}
\begin{tabular}{|c|cccc|}
  \hline
$|\ptp(A)|$&\#Species & \#Pairs & \#Symmetric & $\chi$ \\
\hline
 1& 19268330382& 364872781 & 70240 & 364802541\\
 2& 2497877& 2620967& 654163& 983402\\
 3& 15618& 77434& 42211& 11741\\
 4& 6890& 923949& 166421& 189382\\
 5& 12& & & \\
 6& 2237& 1010064& 65304& 157460\\
 7& 5& 7& & 1\\
 8& 151& 149780& 47940& 12730\\
 9& 10& 677& 434& 27\\
 10& 21& & & \\
 12& 196& 1807096& 122512& 140382\\
 14& 1& & & \\
 16& 10& 25392& 8224& 1073\\
 18& 43& 93779& 12923& 4492\\
 20& 3& & & \\
 21& 4& & & \\
 24& 28& 555467& 74291& 20049\\
 30& 4& & & \\
 32& 1& 284& 124& 5\\
 36& 23& 685034& 79838& 16811\\
 48& 1& 197& 149& 1\\
 54& 2& 187657& 16693& 3166\\
 60& 1& & & \\
 72& 6& 541584& 105192& 6061\\
 96& 2& 14568& 10152& 46\\
 108& 4& 260888& 27392& 2162\\
 162& 1& 3124& 2314& 5\\
 168& 1& 84& 84& \\
 216& 2& 544264& 105136& 2033\\
 324& 1& 139968& 81972& 179\\
 432& 1& 4171& 3739& 1\\
 972& 1& 1241361& 225621& 1045\\
 2916& 1& 2049219& 375435& 574\\
 23328& 1& 12445836& 4071084& 359\\
\hline
\hline
Total& 19270853541 & 390255632 & 6369588 & 366355728 \\
\hline
\end{tabular}
\caption{\label{tbl:pairs9data}Data for counting pairs $(A,B)$ of MOLS(9)}
\end{center}
\end{table}  

We next consider how many times a given species of MOLS will appear
in $\Omega$.

\begin{lemma}\label{l:aspects}
The number of MOLS in $\Omega$ that are paratopic to a given pair $P$ is
\[
\frac{1}{|\ptp(P)|}\sum_{i=1}^4\big|\ptp(P[i])\big|.
\]
\end{lemma}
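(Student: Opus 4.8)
The plan is to count, in two ways, the pairs $(Q, \sigma)$ where $Q \in \Omega$ is paratopic to $P$ and $\sigma$ is a paratopism carrying $P$ to $Q$. Recall that $\Omega$ consists of reduced pairs $(A,B)$ of MOLS(9) with $A$ drawn from the fixed set $\Lambda$ of species representatives of reduced latin squares; equivalently (up to the bookkeeping of Theorem~\ref{t:switch}) $\Omega$ contains exactly one pair from each isotopism class of lists of MOLS, except that we must be careful about the distinction between lists and sets. The cleanest route is to work at the level of species of the first aspect: a reduced pair $(A,B)$ lies in $\Omega$ iff $A$ is the species representative of its own species among latin squares, i.e.\ $A \in \Lambda$.

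First I would set up the correspondence between members of $\Omega$ paratopic to $P$ and certain cosets. A pair $P$ of MOLS(9) has four aspects $P[1], P[2], P[3], P[4]$, obtained by deleting one of the four columns of the orthogonal array. Under the paratopism group $\sym_n \wr \sym_{k+2}$ acting on the orthogonal array, the stabiliser of $P$ is $\ptp(P)$, and by the Orbit–Stabiliser Theorem the species of $P$ has $|\sym_9 \wr \sym_4| / |\ptp(P)|$ orthogonal-array representatives. Now each member $Q$ of $\Omega$ paratopic to $P$ corresponds, roughly, to a choice of which of the four columns becomes the ``first aspect'' plus a choice of reduced representative within the relevant isotopism class; the key point is that the number of isotopism classes of reduced pairs packaged into one member of $\Omega$, weighted appropriately, is governed by the autoparatopism group of the aspect that gets singled out. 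The factor $|\ptp(P[i])|$ appears because, after fixing the $i$-th column of $P$ as the latin square $A$, the freedom to re-present the pair as an element of $\Omega$ — i.e.\ the multiplicity with which a single species of MOLS shows up over the single species representative $A \in \Lambda$ — is exactly $|\ptp(P[i])| / |\ptp(P)|$, by a second Orbit–Stabiliser computation inside the stabiliser of that aspect.

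Concretely, the key steps in order are: (1)~fix $i \in \{1,2,3,4\}$ and consider those $Q \in \Omega$ paratopic to $P$ that arise by deleting the $i$-th column, so that the retained latin square is (the species representative paratopic to) $P[i]$; (2)~count these using Orbit–Stabiliser for the action of the subgroup of $\sym_9 \wr \sym_4$ fixing the distinguished column setwise on the species of $P$, obtaining a contribution proportional to $|\ptp(P[i])|/|\ptp(P)|$ once one divides out the redundancy from permuting symbols within columns and from reduction (which cancels cleanly); (3)~observe that the four choices of $i$ partition the set of $Q \in \Omega$ paratopic to $P$ — a pair $Q=(A,B) \in \Omega$ remembers which aspect is its ``first coordinate'' $A\in\Lambda$, so there is no double counting across different $i$; (4)~sum over $i=1,\dots,4$ to get $\frac{1}{|\ptp(P)|}\sum_{i=1}^4 |\ptp(P[i])|$. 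One should double-check the degenerate possibility that two aspects $P[i], P[j]$ are themselves paratopic — this does \emph{not} cause over- or under-counting because the partition in step~(3) is by the position of the deleted column, not by the species of the aspect, and the formula is already additive over positions.

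The main obstacle I expect is step~(2): getting the bookkeeping factors exactly right, so that all the $n!$-type and reduction factors cancel and one is left precisely with $|\ptp(P[i])|/|\ptp(P)|$ with no stray constants. This requires being scrupulous about the distinction between lists and sets of MOLS (a pair $(A,B)$ versus the set $\{A,B\}$), about the fact that $\Omega$ ranges over reduced pairs rather than all pairs, and about the relation between $\ptp$ of a pair and $\ptp$ of an aspect — the latter lives in $\sym_9 \wr \sym_3$ while the former lives in $\sym_9 \wr \sym_4$, and one must track how an autoparatopism of $P$ permutes the four aspects among themselves. Once that index computation is pinned down — most transparently via Theorem~\ref{t:switch} and Theorem~\ref{t:RSfromreps}, which already encapsulate exactly these cancellations — the summation in steps~(3)–(4) is immediate.
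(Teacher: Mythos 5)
Your overall strategy --- double-count the pairs $(Q,\sigma)$ with $Q\in\Omega$ and $\sigma$ a paratopism carrying $P$ to $Q$, split according to which column of the orthogonal array of $P$ is sent to the ``first square'' position --- is exactly the paper's argument (there it appears as a sum over $g\in G=\sym_9\wr\sym_4$ broken over the four cosets of the subgroup $H$ preserving that position). The bookkeeping you defer to Theorems~\ref{t:switch} and~\ref{t:RSfromreps} does cancel as you hope: within the coset of paratopisms sending column $i$ to the distinguished position, those whose induced paratopism of $P[i]$ lands on the species representative in $\Lambda$ form a coset of $\ptp(P[i])$, and for each of these exactly one of the $n!$ symbol permutations of the deleted column makes the second square reduced; so that coset contributes exactly $\big|\ptp(P[i])\big|$ paratopisms $\sigma$ with $P^\sigma\in\Omega$.

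However, step~(3) as you state it is false, and your ``degenerate possibility'' paragraph does not repair it. The four choices of $i$ partition the set of pairs $(Q,\sigma)$, not the set of $Q\in\Omega$ paratopic to $P$: a single $Q$ is reached by exactly $|\ptp(P)|$ paratopisms $\sigma$, and whenever $\ptp(P)$ contains an element that moves the columns of the orthogonal array, these $\sigma$ are spread across several values of $i$. Thus ``the position of the deleted column'' is a property of $\sigma$ and is not well defined as a function of $Q$, so it cannot index a partition of the $Q$'s. This is not cosmetic: it is precisely the origin of the prefactor $1/|\ptp(P)|$. Summing the per-position counts gives $\sum_i|\ptp(P[i])|$, which counts each $Q$ with multiplicity $|\ptp(P)|$, and dividing out that multiplicity is the final step. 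If the positions genuinely partitioned the $Q$'s as you claim, the division by $|\ptp(P)|$ would have to be absorbed into each per-position count, and in general the number of distinct $Q$'s arising via position $i$ is $|\ptp(P[i])|$ divided by the order of the stabiliser of column $i$ in $\ptp(P)$, not by $|\ptp(P)|$. Replace step~(3) by ``the positions partition the paratopisms $\sigma$ with $P^\sigma\in\Omega$,'' and add as a final step that each $Q$ accounts for exactly $|\ptp(P)|$ such $\sigma$; with that correction your argument is complete and coincides with the paper's.
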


\begin{proof}
  Let $G$ denote the paratopism group $\sym_n\wr\sym_4$ and let
  $H=\sym_n\wr(\sym_3\times S_1)$ be the subgroup of $G$ that
  preserves the species of $P[4]$, the first square in the pair $P$. For
  $g\in G$, let $P^g$ denote the image of $P$ under the action of $g$
  and let $P^G$ denote the orbit of $P$ under the action of $G$.  The
  quantity we seek is $|\Omega\cap P^G|$.  From the action of $H$ we
  see that each choice of $A$ from a species of latin squares has the
  same number of choices for $B$ for which $(A,B)\in P^G$. Hence
$$\big|\Omega\cap P^G\big|=\big|\{(A,B)\in P^G:A\in\Lambda\}\big|
=\sum_{(A,B)\in P^G}\frac{1}{|(A,B)^H|}
=\sum_{(A,B)\in P^G}\frac{|\ptp(A)|}{|H|},$$
by the Orbit-Stabiliser Theorem. 
Now
$$\sum_{(A,B)\in P^G}|\ptp(A)|
=\sum_{g\in G}\frac{|\ptp(P^g[4])|}{|\ptp(P)|}
=\sum_{i=1}^4\frac{|H||\ptp(P^g[i])|}{|\ptp(P)|},$$
from which the result follows.
\end{proof}

\Tref{tbl:pairs9data} shows some of the data that was used to
calculate the number of pairs of MOLS of order 9. In it, MOLS are
classified according to $g=|\ptp(A)|$, the order of the autoparatopism
group of the first latin square in the pair.  The value of $g$ is listed in the
first column. The second column counts how many species of latin squares have
autoparatopism group of size $g$ (this data was first calculated in
\cite{MMM07}).  The third column records the number of MOLS,
in other words, $\sum\theta(A)$ over all $A\in\Lambda$ with
$|\ptp(A)|=g$. The value for $g=1$ was deduced from \eref{e:sizeOmega}
and the values for larger $g$.  The fourth column lists how many 
symmetric MOLS were counted in the third column. This
information was obtained by applying \lref{l:aspects} to $\Gamma_2$.
The last column of \Tref{tbl:pairs9data} is headed $\chi$.  It is
calculated by subtracting the fourth column from the third column,
then dividing by $g$ (the first column). By \lref{l:aspects}, the
total $\chi$, namely 366355728, is four times the number of rigid
MOLS, which must therefore be 91588932. Together with the
$|\Gamma_2|=257442$ species of symmetric MOLS, this shows that there 
are a total of 91846374 species of \MOLS{2}{9}.

\begin{table}[htbp]
\begin{minipage}[t]{0.48\linewidth}
\begin{tabular}{|cc|cc|c|}
  \hline
$|\ptp|$&$|\atp|$&all&non-max\\
\hline
1&1&91588932&3\\
2&1&72273&12\\
2&2&156009&18\\
3&1&1859&3\\
3&3&17346&40\\
4&1&25&1\\
4&2&4923&32\\
4&4&411&1\\
6&1&302&7\\
6&2&275&\\
6&3&1522&28\\
6&6&1074&90\\
8&1&2&2\\
8&2&111&10\\
8&4&123&3\\
8&8&1&\\
9&3&103&3\\
9&9&256&18\\
12&2&51&4\\
12&3&1&1\\
12&4&6&\\
12&6&228&32\\
12&12&4&\\
16&2&9&2\\
16&4&37&\\
16&8&5&2\\
18&3&75&6\\
18&6&43&6\\
18&9&101&18\\
18&18&50&24\\
24&1&1&1\\
24&6&10&1\\
24&12&6&\\
27&27&12&\\
32&4&4&1\\
32&8&4&3\\
36&6&27&6\\
\hline
\end{tabular}
\end{minipage}
\quad
\begin{minipage}[t]{0.48\linewidth}
\begin{tabular}{|cc|cc|c|}
  \hline
$|\ptp|$&$|\atp|$&all&non-max\\
\hline
36&12&2&\\
36&18&40&15\\
48&2&3&2\\
48&4&1&\\
48&6&7&\\
48&8&3&2\\
48&12&1&\\
54&9&19&2\\
54&27&15&1\\
54&54&2&1\\
64&8&4&1\\
72&12&1&\\
72&18&7&7\\
72&36&1&1\\
81&27&2&\\
96&12&2&\\
108&18&9&1\\
108&54&7&3\\
144&36&1&1\\
162&27&4&3\\
162&54&1&\\
162&162&1&\\
216&36&1&\\
216&54&3&2\\
288&48&1&1\\
324&54&1&\\
384&48&1&1\\
432&54&1&1\\
432&72&2&2\\
486&81&1&\\
3888&486&1&\\
576&72&1&1\\
972&162&4&4\\
5184&648&1&1\\
11664&486&1&1\\
93312&3888&1&1\\
\hline
\hline
&Total&91846374&433\\
\hline
\end{tabular}\\
\end{minipage}
\caption{\label{tbl:pairs9bysym}Species of \MOLS{2}{9} categorised by symmetry}
\end{table}  

\Tref{tbl:pairs9bysym} shows the 91846374 species of MOLS categorised
by the sizes of their autoparatopism group and autotopism group
($|\ptp|$ and $|\atp|$, respectively).  For each combination of these
group sizes, the table lists the total number of species with groups of
those sizes and also the number of species of non-maximal MOLS. There
are only 433 species of non-maximal MOLS. These were identified by
screening $\Omega$ as it was produced, with all non-maximal MOLS that
we encountered then being stored in a separate file for later
analysis, including construction of all larger sets of MOLS.  
We stress that the values of $|\ptp|$ and $|\atp|$ given in
\Tref{tbl:pairs9bysym} are for lists rather than sets of MOLS (cf. the
example at the end of \sref{s:equiv}).

\begin{table}[tb]
\begin{tabular}{c|ccc}
\hline
order&
\parbox[m]{38mm}{Proportion of species\tspacer\\ that have a mate}&
\parbox[m]{48mm}{Probability of a random\tspacer\\ latin square having a mate}&
\parbox[m]{36mm}{Expected number\tspacer\\ of mates}\\[1.5ex]
\hline
$3$&$1$
&$1$
&$1$\\
$4$&$\frac{1}{2}$
&$\frac{1}{4}$
&$\frac{1}{2}$\\
$5$&$\frac{1}{2}$\spacer
&$\frac{3}{28}\approx 0.107143$
&$\frac{9}{28}\approx 0.321429$\\
$6$&$0$
&$0$
&$0$\\
$7$&$\frac{6}{147}\approx 0.040816$\spacer
&$\frac{5891}{564736}\approx 0.010431$
&$\frac{1427}{70592}\approx 0.020215$\\
$8$&$\frac{2024}{283657}\approx 0.007135$\spacer
&$\frac{103065585}{22303391744}\approx 0.004621$
&$\frac{40888485}{2787923968}\approx 0.014666$\\
$9$&$\frac{348498052}{19270853541}\approx 0.018084$
&$\frac{23706924145915}{1311102676959232}\approx 0.018082$
&$\frac{24960190907155}{1311102676959232}\approx 0.019038$\spacer\\
\hline
\end{tabular}
\caption{\label{tbl:RLS}Data for random latin squares of order $3\le n\le9$}
\end{table}

\section{The number of orthogonal mates}\label{s:nummates}

In this section we provide data on the number of orthogonal mates for
latin squares of order up to and including 9.  Data on the number of
species of bachelor latin squares of order $n\le9$ was first published
in \cite{ew2011lswrt}.  Here we calculate the probability of a
uniformly random latin square having an orthogonal mate, and the
expected number of mates. This is a simple calculation where each
species is weighted by the number of latin squares in that species in
order to calculate statistics across the whole set of latin squares of
a given order. The results are given in \Tref{tbl:RLS}.  It is
noteworthy that in \cite{MMM07} it is estimated that around 60\% of
latin squares of order 10 have mates and the expected number of mates
in a random latin square of order 10 exceeds 1. The values for 
orders in the range $5\le n\le9$ are clearly a lot smaller than this.

The only latin squares of order less than 7 that have orthogonal mates
are isotopic to the cyclic group of order 3 (which has 1 mate), the
elementary abelian group of order 4 (2 mates) or the cyclic group of
order 5 (3 mates).  Hence for the remainder of this section we
concentrate on the range $7\le n\le9$.

\begin{table}[tb]
     \setlength{\tabcolsep}{2\tabcolsep}
\begin{center}
\begin{tabular}{c|rrr}
         \hline
$r$&\multicolumn{1}{c}{7}&\multicolumn{1}{c}{8}&\multicolumn{1}{c}{9}\\
\hline
0&1&1223&336634416\\
1&3& 329&11654552\\
2& & 175&123054\\
3&1&  90&38700\\
4& &  67&20131\\
5& &  49&10913\\
6& &  31&7672\\
7& &  17&4552\\
8& &  15&2141\\
9&1&  7&902\\
10& &  4&341\\
\hline
\end{tabular}
\qquad
\begin{tabular}{c|rrr}
         \hline
$r$&\multicolumn{1}{c}{7}&\multicolumn{1}{c}{8}&\multicolumn{1}{c}{9}\\
\hline
11& &  6&379\\
12& &  5&217\\
13& &  1&30\\
14& &  3&6\\
15& &  1& \\
16& &  1&31\\
17& &   &10\\
18& &   &2\\
20& &   &2\\
23& &   &1\\
\hline
Total &6&2024& 348498052\\
\end{tabular}
\caption{\label{tbl:log2theta}
Non-bachelor species of order $7\le n\le9$ grouped by 
$r=\lfloor\log_2(\theta)\rfloor$}
\end{center}
\end{table}

For the latin squares of order $7\le n\le 9$ that have $\theta>0$
mates, we provide a summary of the number of mates in
\Tref{tbl:log2theta}. Since $\theta$ takes many different values for
these squares and the distribution is distinctly skewed towards
smaller values, we have grouped the counts according to the value of
$r=\lfloor\log_2(\theta)\rfloor$. In other words, for each $r$ the
table reports the number of different species for which the number of
mates lies in the interval $[2^r,2^{r+1})$.

It is not surprising that the latin squares with the most orthogonal
mates tend to have nice algebraic structure. The two species of order
9 with the most mates contain the elementary abelian group ($12445836$
mates) and the cyclic group ($2049219$ mates). The species with the
third highest number of mates ($1241361$) contains the 3
non-associative conjugacy-closed loops of order 9 (see \cite{CCloop}
for a definition of these loops).  Below that, the sequence of the number
of mates continues 403056, 277788, 253276, 242832, 237786, 226822,
207297,$\dots$.  There are 74 species with at least 10000 mates and
every one of them has a non-trivial autotopism group and
at least 4 subsquares of order 3. The species with the largest number
of mates and a trivial autotopism group (in fact, it is rigid) has
8226 mates, 6 subsquares of order 3 and 371 transversals.  A
representative of this species is
\[
\left[\setlength{\arraycolsep}{0.25\arraycolsep}
\begin{array}{ccccccccc}
\shdA0&1&2&3&4&\shdA5&\shdA6&7&8\\ 
1&\shdB2&0&4&5&\shdB3&\shdB7&8&6\\ 
2&0&\shdC1&5&3&\shdC4&\shdC8&6&7\\ 
\shdA5&8&3&2&7&\shdA6&\shdA0&1&4\\ 
\shdA6&4&7&8&1&\shdA0&\shdA5&3&2\\ 
8&\shdB7&6&1&0&\shdB2&\shdB3&4&5\\ 
4&\shdB3&5&6&8&\shdB7&\shdB2&0&1\\ 
7&6&\shdC8&0&2&\shdC1&\shdC4&5&3\\ 
3&5&\shdC4&7&6&\shdC8&\shdC1&2&0\\
\end{array}
\right],
\]
where shading indicates the subsquares of order 3 other than those
formed by the first 3 rows. Among the species with no subsquares of
order 3, the one with the most mates (4171) is the planar species $d$, which
has 72 subsquares of order 2, the maximum possible number 
\cite{McKayMcLeodWanless06}.

For order 8 the species with the three highest numbers of mates
contain the elementary abelian group (70272 mates), dihedral group
(33408 mates) and quaternion group (32256 mates), respectively. In
fourth place, with 23232 mates, is a species containing a loop that is
nearly a group in the sense that it has a large nucleus (isomorphic to
the Klein $4$-group).
The species of the group $\Z_4\times\Z_2$ is in fifth place (23040
mates). The top five places are occupied by the only latin squares
with $384$ transversals, which is the most that any latin square of order
$8$ has. The next highest number of mates is 12048.  The cyclic group,
of course, has no transversals and hence no mates.


For order 7 the species with the highest numbers of mates contain the
cyclic group (133 transversals, 63 mates), the Steiner quasigroup (63
transversals, 8 mates) and the pan-hamiltonian latin square that
is not atomic (25 transversals, 3 mates) (see $A_7$ in \cite{Wan99}).

\section{Number of sets of MOLS and maxMOLS}\label{s:MaxMOLS}

\begin{table}[htb]
\begin{center}
\begin{tabular}{cc|cccccc}
  \hline
  $n$ & $k$ & Equality & Isotopism  & Trisotopism  & Paratopism & PP & DPP\\
  \hline
  2&1&1&1&1&1&1&1\\[1ex]
  3&2&1&1&1&1&1&1\\[1ex]
  4&1&3&1&1&1&\\
  4&3&1&1&1&1&1&1\\[1ex]
  5&1&50&1&1&1&\\
  5&4&6&1&1&1&1&1\\[1ex]
  6&1&9408&22&17&12&\\[1ex]
  7&1&16765350&549&314&141&\\
  7&2&341880&17&11&5\\
  7&6&120&1&1&1&1&1\\[1ex]
  8&1&532807827816&1665394&836595&281633&\\
  8&2&7832534400&23005&11704&2127\\
  8&3&14923440&221&147&38\\
  8&7&240&1&1&1&1&1\\[1ex]
  9&1&370769976810235296&113527931950&56764991345&18922355489\\
  9&2&7188529229970480&1101731294&550905816&91845941\\
  9&3&7648799760&2943&1578&232\\
  9&4&665884800&371&203&22\\
  9&5&222499200&318&200&36\\
  9&8&7728840&19&15&7&4&3\\
\hline
\end{tabular}
\caption{\label{tbl:nummaxMOLS}Number of reduced sets of \maxMOLS{k}{n}} 
\end{center}
\end{table}  

For $1\le k<n\le9$, \Tref{tbl:nummaxMOLS} gives the number of 
reduced sets of \maxMOLS{k}{n} modulo each of the different notions of
equivalence defined in \sref{s:equiv}.  The column headed ``Equality''
gives the total number of reduced sets of \maxMOLS{k}{n}, in other
words, $\RS_{k,n}$. These numbers were calculated from a list of
species representatives using \tref{t:RSfromreps}.  The
number of (not necessarily reduced) sets of \maxMOLS{k}{n} can be
found from $\RS_{k,n}$ using \tref{t:switch}, as can the number of
lists of \maxMOLS{k}{n} (reduced or otherwise).

\begin{table}[htbp]
\begin{center}
\begin{tabular}{cc|cccccc}
  \hline
  $n$ & $k$ & Equality & Isotopism  & Trisotopism  & Paratopism & PP & DPP\\
  \hline
  2&1&1&1&1&1&1&1\\[1ex]
  3&1&1&1&1&1&&\\
  3&2&1&1&1&1&1&1\\[1ex]
  4&1&4&2&2&2&\\
  4&2&2&1&1&1&&\\
  4&3&1&1&1&1&1&1\\[1ex]
  5&1&56&2&2&2&\\
  5&2&18&2&2&1&&\\
  5&3&18&1&1&1&&\\
  5&4&6&1&1&1&1&1\\[1ex]
  6&1&9408&22&17&12&\\[1ex]
  7&1&16942080&564&324&147&\\
  7&2&342480&20&14&7\\
  7&3&1200&4&3&1\\
  7&4&1200&3&3&1\\
  7&5&600&1&1&1\\
  7&6&120&1&1&1&1&1\\[1ex]
  8&1&535281401856&1676267&842227&283657&\\
  8&2&7850589120&23362&11887&2165\\
  8&3&14927040&224&149&39\\
  8&4&4800&3&2&1\\
  8&5&3600&1&1&1\\
  8&6&1440&1&1&1\\
  8&7&240&1&1&1&1&1\\[1ex]
  9&1&377597570964258816&115618721533&57810418543&19270853541&\\
  9&2&7188534981260640&1101734942&550907773&91846374&\\
  9&3&9338177520&4428&2408&371\\
  9&4&1526884800&1096&642&96\\
  9&5&493008600&454&293&56\\
  9&6&162305640&82&62&15\\
  9&7&54101880&38&29&11\\
  9&8&7728840&19&15&7&4&3\\
\hline
\end{tabular}
\caption{\label{tbl:numMOLS}Number of reduced sets of \MOLS{k}{n}} 
\end{center}
\end{table}

\begin{table}[htb] 
\begin{center}
\begin{tabular}{cc|cccccc}
  \hline
  $n$ & $k$ & Equality & Isotopism  & Trisotopism  & Paratopism & PP & DPP\\
  \hline
  2&1&1&1&1&1&1&1\\[1ex]
  3&2&1&1&1&1&1&1\\[1ex]
  4&1&3&1&1&1&\\
  4&3&2&1&1&1&1&1\\[1ex]
  5&1&50&1&1&1&\\
  5&4&36&6&3&1&1&1\\[1ex]
  6&1&9408&22&17&12&\\[1ex]
  7&1&16765350&549&314&141&\\
  7&2&341880&29&17&5\\
  7&6&14400&120&60&1&1&1\\[1ex]
  8&1&532807827816&1665394&836595&281633&\\
  8&2&7832534400&45222&23005&2127\\
  8&3&29846880&1217&616&38\\
  8&7&172800&240&120&1&1&1\\[1ex]
  9&1&370769976810235296&113527931950&56764991345&18922355489\\
  9&2&7188529229970480&2203304036&1101731294&91845941\\
  9&3&15297599520&15963&8228&232\\
  9&4&3995308800&8150&4111&22\\
  9&5&5339980800&18060&9030&36\\
  9&8&38953353600&56700&28350&7&4&3\\
\hline
\end{tabular}
\caption{\label{tbl:nummaxMOLSlist}Number of reduced lists of \maxMOLS{k}{n}} 
\end{center}
\end{table}  

\begin{table}[htbp]
\begin{center}
\begin{tabular}{cc|cccccc}
  \hline
  $n$ & $k$ & Equality & Isotopism  & Trisotopism  & Paratopism & PP & DPP\\
  \hline
  2&1&1&1&1&1&1&1\\[1ex]
  3&1&1&1&1&1&&\\
  3&2&1&1&1&1&1&1\\[1ex]
  4&1&4&2&2&2&\\
  4&2&2&1&1&1&&\\
  4&3&2&1&1&1&1&1\\[1ex]
  5&1&56&2&2&2&\\
  5&2&18&3&2&1&&\\
  5&3&36&6&3&1&&\\
  5&4&36&6&3&1&1&1\\[1ex]
  6&1&9408&22&17&12&\\[1ex]
  7&1&16942080&564&324&147&\\
  7&2&342480&34&20&7\\
  7&3&2400&20&10&1\\
  7&4&7200&60&30&1\\
  7&5&14400&120&60&1\\
  7&6&14400&120&60&1&1&1\\[1ex]
  8&1&535281401856&1676267&842227&283657&\\
  8&2&7850589120&45927&23362&2165\\
  8&3&29854080&1227&621&39\\
  8&4&28800&40&20&1\\
  8&5&86400&120&60&1\\
  8&6&172800&240&120&1\\
  8&7&172800&240&120&1&1&1\\[1ex]
  9&1&377597570964258816&115618721533&57810418543&19270853541&\\
  9&2&7188534981260640&2203310919&1101734942&91846374&\\
  9&3&18676355040&23677&12264&371\\
  9&4&9161308800&21705&10944&96\\
  9&5&11832206400&27510&13800&56\\
  9&6&19476676800&28350&14220&15\\
  9&7&38953353600&56700&28350&11\\
  9&8&38953353600&56700&28350&7&4&3\\
\hline
\end{tabular}
\caption{\label{tbl:numMOLSlist}Number of reduced lists of \MOLS{k}{n}} 
\end{center}
\end{table}

\Tref{tbl:numMOLS} gives, for each $1\le k<n\le9$, the number of
non-equivalent reduced sets of \MOLS{k}{n} under each of the different
notions of equivalence defined in \sref{s:equiv}. We stress that the
difference between \Tref{tbl:nummaxMOLS} and \Tref{tbl:numMOLS} is
that the former counts only {\em maximal} sets, while the latter also
includes sets that are not maximal.  Following those tables, we give
\Tref{tbl:nummaxMOLSlist} and \Tref{tbl:numMOLSlist} which provide 
the same information, except for lists of MOLS rather than sets of MOLS.
In Tables \ref{tbl:nummaxMOLS} to \ref{tbl:numMOLSlist}, the stipulation
that the MOLS should be reduced only affects the counts in the column
headed ``Equality''. Every isotopism class contains reduced MOLS so counting
reduced MOLS up to isotopism is the same as counting isotopism classes.
Similar statements apply to trisotopism classes and species.

\begin{table}[htb]
\begin{center}
\begin{tabular}{c|cc|c}
  \hline
  \#Common & \multicolumn{2}{c|}{\#Disjoint}&\\
  transversals		 &0  	&1  &Total\\
  \hline
  0&1	 	& 	&1\\
  1& 	 	&1	&1\\
  2& 	 	&1 	&1\\
  4& 	 	&2 	&2\\
  \hline
  Total&1		&4	&5\\
\end{tabular}
\caption{\label{tbl:commonTransv2maxMOLS7}\maxMOLS{2}{7} according to 
their common transversals} 
\end{center}
\end{table}

\begin{table}[htb]
\begin{center}
\begin{tabular}{c|cccc|c}
  \hline
  \#Common & \multicolumn{4}{c|}{\#Disjoint}&\\
  transversals		 &0  &1 	&2 	&4 &Total\\
  \hline
  0&1980& 	&  	&		&1980\\
  1& 	 &23	&  	&		&23\\
  2& 	 &10 	&60 &		&70\\
  3& 	 &1		&  	&		&1\\
  4& 	 &		&16 &26	&42\\
  8& 	 &		&1  &7	&8\\
  12& 	 &		&1  &1	&2\\
  19& 	 &		&1  &		&1\\
  \hline
  Total&1980&34	&79	&34	&2127\\
\end{tabular}
\caption{\label{tbl:commonTransv2maxMOLS8}\maxMOLS{2}{8} according to 
their common transversals}
\end{center}
\end{table}

\begin{table}[htb]
\begin{center}
\begin{tabular}{c|cccc|c}
  \hline
  \#Common & \multicolumn{4}{c|}{\#Disjoint}&\\
  transversals		 &0  &1 	&3 	&5 &Total\\
  \hline
  0&188& 		&  	&		&188\\
  1& 	 &8		&  	&		&8\\
  2& 	 &5 	&  	&		&5\\
  3& 	 &7		&5  &		&12\\
  4& 	 &6		&  	&		&6\\
  5& 	 &6		&  	&1	&7\\
  6& 	 &		&3  &		&3\\
  10& 	 &1		&  	&		&1\\
  11& 	 &		&1 	&		&1\\
  12& 	 &		&1  &		&1\\
  \hline
  Total&188&33	&10	&1	&232\\
\end{tabular}
     \caption{\label{tbl:commonTransv3maxMOLS9}\maxMOLS{3}{9} according to their common transversals}
\end{center}
\end{table}
    
\begin{table}[htb]
\begin{center}
\begin{tabular}{c|cc|c}
  \hline
  \#Common & \multicolumn{2}{c|}{\#Disjoint}&\\
  transversals		 &0  	&3 	&Total\\
  \hline
  0&21	& 	&21\\
  6& 	 	&1	&1\\
  \hline
  Total&21	&1	&22\\
\end{tabular}
\caption{\label{tbl:commonTransv4maxMOLS9}\maxMOLS{4}{9} according to their 
common transversals}
\end{center}
\end{table}

\begin{table}[htb]
\begin{center}
\begin{tabular}{c|cccccccc|c}
  \hline
  \#Common & \multicolumn{8}{c|}{\#Disjoint}&\\
  transversals		 &0&1&2	&3 &4&5&6&7	&Total\\
\hline
0& 183793 &&&&&&& & 183793 \\
1&& 14079 &&&&&& & 14079 \\
2&& 32580 & 1244 &&&&& & 33824 \\
3--4&&8051&3672&1605&9&&&&13337\\
5--8&&2397&3128&1756&74&47&22&1&7425\\
9--16&&483&1023&1328&253&69&67&6&3229\\
17--32&&140&210&457&157&79&75&2&1120\\
33--60&&13&20&97&13&2&9&2&156\\
66--120&&20&3&21&2&1&&1&48\\
216&&&&1&&&&&1\\
  \hline
  Total& 183793 & 57763 & 9300 & 5265 & 508 & 198 & 173 & 12 & 257012 \\
\end{tabular}
\caption{\label{tbl:commonTransv2maxMOLS9}Symmetric \maxMOLS{2}{9} according to their common transversals}    
\end{center}
\end{table}

For the remainder of this discussion we count all MOLS by species. 
Hedayat, Parker and Federer
\cite{HedayatParkerFederer1970} showed how sets of disjoint common
transversals of a set of MOLS can be used to design successive
experiments.  In \Tref{tbl:commonTransv2maxMOLS7}, the $5$ sets of
\maxMOLS{2}{7} are classified according to their number of common
transversals and maximum number of disjoint common transversals.  We
present similar tables for the 2127 sets of \maxMOLS{2}{8}
(\Tref{tbl:commonTransv2maxMOLS8}), the 232 sets of \maxMOLS{3}{9}
(\Tref{tbl:commonTransv3maxMOLS9}) and the 22 sets of \maxMOLS{4}{9}
(\Tref{tbl:commonTransv4maxMOLS9}).  We do not provide tables for the
38 sets of \maxMOLS{3}{8} or the 36 sets of \maxMOLS{5}{9}, each of
which has no common transversal.  We also do not provide a table for
the 91845941 sets of \maxMOLS{2}{9}, since we did not collect data on
their common transversals. However, in
\Tref{tbl:commonTransv2maxMOLS9} we do summarise the {\em symmetric}
\maxMOLS{2}{9} according to their common transversals.

Our next aim is to examine how prevalent planar species of latin
squares are in MOLS. We say that a latin square $L$ is {\em involved}
in MOLS $M$ if at least one aspect of $M$ is paratopic to $L$. We say
that a set of MOLS has type P (respectively N) if every latin square
in the set of MOLS is planar (respectively, non-planar).  A set of
MOLS is of type M (for mixed) if it is neither of type P or N.  In
\Tref{T:planartype} we classify the species of \maxMOLS{k}{9}
according to which types of MOLS they contain. Types of MOLS that are
not listed are assumed to be not present. So, for example, the column
headed ``PM'' counts species of MOLS that contain at least one set of
MOLS of type P, at least one set of MOLS of type M, and no sets of
MOLS of type N.  It is worth remarking that there are no columns headed
``M'' or ``PN'' because no \maxMOLS{k}{9} fell in those
categories. There seems to be no obvious reason why ``M'' is impossible,
but we now describe an obstacle that prevents ``PN''
occurring. Suppose that we have an $n^2\times k$ orthogonal array $O$.
Let $O_{ij}$ be the set of MOLS obtained by taking column $i$ of $O$
to index the rows of our MOLS, and column $j$ of $O$ to index the
columns of our MOLS.  Suppose that $O_{12}$ is of type P and $O_{ab}$
is of type N, for some $1\le a<b\le k$. Then $O_{1b}$ is of type M
since it contains one latin square paratopic to an element of $O_{12}$
and another latin square paratopic to an element of $O_{ab}$.

\begin{table}[htb]
\begin{center}
\begin{tabular}{c|ccccc|c}
  \hline
  $k$& P&N&PM	&NM &PNM&Total\\
  \hline
  1&&18922355489&&&&18922355489 \\
  2&3&91835638&6&10224&70&91845941 \\
  3&1&39&3&186&3&232 \\
  4&&&3&4&15&22 \\
  5&7&&19&6&4&36 \\
  8&7&&&&&7 \\
\hline
\end{tabular}
\caption{\label{T:planartype}Species of 
\maxMOLS{k}{9} classified by planarity type}
\end{center}
\end{table}

Clearly, each of the 7 species of \maxMOLS{8}{9} involve only planar
latin squares.  Planar latin squares are also
involved in many of the \maxMOLS{k}{9} for $k\in\{3,4,5\}$.  In
particular, we can see from \Tref{T:planartype} that all 36 species of
\maxMOLS{5}{9} involve at least one planar latin square and seven of
them involve only planar latin squares.  All 22 species of
\maxMOLS{4}{9} involve at least one planar latin square and at least
one non-planar latin square.  
There are three species of \maxMOLS{3}{9} for which there is only one
species of latin square involved; in one case the sole species is the
planar species $e$, in the other two cases the species is not
planar. There is one species of \maxMOLS{5}{9} that involves only one
species of latin square (namely, the planar species $a$, the
elementary abelian group). All other \maxMOLS{5}{9} involve at least
two distinct planar species and between 3 and 9 (inclusive) species of
latin squares in total.

We next consider the possibility that a latin square $L$ may be in a
set of $\theta(L)+1$ MOLS. In other words, the set of all orthogonal
mates for $L$ itself forms a set of MOLS. This is automatically true
if $\theta(L)=1$ but we would expect it to be rare for larger values
of $\theta$.  For order $9$ we have the following data:
\begin{itemize}\addtolength{\itemsep}{-0.25\baselineskip}
\item There are exactly 11222874 species of order 9 that possess
  exactly two mates.  Of these, 27 species appear in a set of
  \maxMOLS{3}{9}.
\item Of the 431678 species with $\theta = 3$, none occur in a set of
  \maxMOLS{4}{9}.
\item Of the 74741 species with $\theta = 4$, precisely one species is
  in a set of \maxMOLS{5}{9}. A representative of that species is
\[
\left[\setlength{\arraycolsep}{0.25\arraycolsep}
\begin{array}{ccccccccc}
0&1&2&3&4&5&6&7&8\\
1&2&0&7&5&4&8&3&6\\
2&0&1&6&8&7&3&4&5\\
3&8&6&4&0&1&2&5&7\\
4&5&7&0&3&8&1&6&2\\
5&4&8&1&7&6&0&2&3\\
6&7&3&2&1&0&5&8&4\\
7&6&5&8&2&3&4&0&1\\
8&3&4&5&6&2&7&1&0\\
\end{array}
\right].
\]
It has 242 transversals, 3 subsquares of order 3 (all including the entry
in the top left corner) and an autoparatopism group of order 4.
\end{itemize}

\begin{table}[htb]
\begin{center}
\begin{tabular}{cc||c|c|c|cc|ccc|ccccc}
  \hline
 & $n$& \multicolumn{1}{c|}{3} & \multicolumn{1}{c|}{4}&\multicolumn{1}{c|}{5}& 
\multicolumn{2}{c|}{7} & \multicolumn{3}{c|}{8} & \multicolumn{5}{c}{9} \\
 \#LS& $k$&2&3&4 & 2& 6&  2& 3 & 7 &  2& 3 & 4 & 5 & 8\\
  \hline
  1&&1&1&1&2&1&4	&1	&1&116&	3	&	&1	&2\\
  2&&&&&2& &82	&6	&       &5953&10	&1	&	&2\\
  3&&&&&1& &512&	13     &&100971&22	&1	&12	&2\\
  4&&&&& & &1529	&16    &&91738901&44	&5	&12	&\\
  5&&&&& & &	&2	&    & &30	&5	&8	&1\\
  6&&&&& & &        &       && &62	&5	&1	&\\ 
  7&&&&& & &        &       && &38	&1	&1	&\\ 
  8&&&&& & &        &       && &18	&2	&	&\\ 
  9&&&&& & &        &       && &1	&2	&1	&\\ 
 10&&&&& & &        &       && &4	&	&	&\\
  \hline
  Total &&1&1&1&5	&1&2127	&38 &1 &91845941&232	&22	&36	&7\\
\end{tabular}
\caption{\label{tbl:numspeciesinv}Number of species of LS involved 
in the species of \maxMOLS{k}{n}} 
\end{center}
\end{table}

A set of \maxMOLS{k}{n} has $\binom{k+2}{3}$ different
aspects that may potentially belong to different species.
In \Tref{tbl:numspeciesinv} we show, for $2\le k<n\le 9$, how many
different species of latin squares are involved in each species of
\maxMOLS{k}{n}. The number, say $s$, of species of latin squares is listed in
the first column of \Tref{tbl:numspeciesinv}, while other columns 
list the number of species of \maxMOLS{k}{n} which involve exactly $s$ 
different species of latin squares. It seems from the table that it is
fairly common for pairs of MOLS to have aspects in $4$ different species.
However, for $k>2$ the theoretical bound of $\binom{k+2}{3}$ different
species is rarely attained among the cases covered by \Tref{tbl:numspeciesinv}.

\begin{table}[htb]
\begin{center}
\begin{tabular}{ccccccccc}
  \hline   
Species 
&	Transversals	&	$\alpha$	&	$\theta$ &$k=2$	& $k=3$ & $k=4$ & $k=5$ & $k=8$\\
\hline  
$a$	 &	2241	&	5	&	12445836	&935&	69	&	16&	34	&	5	  \\
$b$	 &	417	&	4	&	11448	        &265&	1		&		&			&	2	  \\
$c$	 &	489	&	4	&	197		&9& 3		&		&	1		&	1	  \\
$d$	 &	801	&	4	&	4171		&20&	9		&	1	&	7		&	2	  \\
$e$	 &	553	&	4	&	3120		&87&	9		&		&			&	1	  \\
$f$	 &	405	&	3	&	8928		&200&	69	&	7	&	22	&	1	  \\
$g$	 &	1620	&	4	&	1241361		&1816&	94	&	18&	30	&	1	  \\
$h$	 &	861	&	4	&	242832		&4248&	9		&	2	&	1		&	1	  \\
$i$	 &	351	&	4	&	2886		&424&	1		&	1	&			&	1	  \\
$j$	 &	369	&	4	&	59		&12&			&	1	&	1		&	1	  \\
$k$	 &	855	&	4	&	403056		&2335& 2		&	1	&	2		&	1	  \\
\hline
non-planar: &		&		&			&	& 		&		&			&		  \\
$\Z_9$ &2025	&5		&2049219			&932&6 		&5		&6			&		  \\	
$\spc$ &819		&4		&	141208		&863&77 		&16		&20			&		  \\
\hline
\end{tabular}
\caption{\label{tbl:planarPlusOAequiv}Statistics on selected species of order $9$ including the number of species of \maxMOLS{k}{9} in which they occur} 
\end{center}
\end{table}

In \Tref{tbl:planarPlusOAequiv} we record statistics on a selection of
species of order $9$.  The species $\{a,b, \ldots, k\}$ are the planar
species according to their alphabetic label given in
\cite{OP97}.  The other two species referred to in
\Tref{tbl:planarPlusOAequiv} are the species of the Cayley table of
$\Z_9$ and a species we call $\spc$, which occurs with high frequency
among \maxMOLS{k}{9} for $k\in\{3,4,5\}$.
Each square in species $\spc$ has 18 subsquares of order 3 (and none
of order 2). A representative of $\spc$ is
\[\left[\setlength{\arraycolsep}{0.25\arraycolsep}
\begin{array}{ccccccccc}
0&1&2&3&4&5&6&7&8\\
1&2&0&4&5&3&7&8&6\\
2&0&1&5&3&4&8&6&7\\
3&5&4&6&7&8&0&2&1\\
4&3&5&7&8&6&1&0&2\\
5&4&3&8&6&7&2&1&0\\
6&8&7&0&2&1&3&4&5\\
7&6&8&1&0&2&4&5&3\\
8&7&6&2&1&0&5&3&4
\end{array}
\right]
\]
\def\li{\star}%
When considered as a loop, it has the antiautomorphic inverse
property.  This means that it satisfies the law $(xy)^\li=y^\li x^\li$, 
for all $x$ and $y$, where $^\li$ denotes the left inverse.
In \Tref{tbl:planarPlusOAequiv} we give the number of transversals for
each species. Next we give the value of $\alpha$, which is
the smallest number of transversals in a maximal set of disjoint 
transversals (see \cite{ew2011lswrt}). After that, we give $\theta$,
the number of orthogonal mates. The remaining columns count how many 
species of \maxMOLS{k}{9} include the given species of latin square.

An interesting feature of \Tref{tbl:planarPlusOAequiv} is that planar
species $a$ has an order of magnitude more mates than any other latin
square, but is a long way from being involved in the most species
of \maxMOLS{2}{9}. In fact that honour does not go to any of the species
covered in the table, but rather to the species represented by
\[\left[\setlength{\arraycolsep}{0.25\arraycolsep}
\begin{array}{ccccccccc}
0&1&2&3&4&5&6&7&8\\
1&2&3&4&5&6&7&8&0\\
2&0&1&5&3&7&8&6&4\\
3&4&8&6&7&2&0&1&5\\
4&8&0&7&2&3&1&5&6\\
5&6&4&8&0&1&2&3&7\\
6&7&5&0&1&8&3&4&2\\
7&5&6&1&8&0&4&2&3\\
8&3&7&2&6&4&5&0&1\\
\end{array}
\right].
\]
This square has 755 transversals and an autoparatopism group of order 2.
It has 121330 mates and is in 58296 different species of \MOLS{2}{9}, all of them maximal.

\section{Crosschecking}\label{s:safety}

Any computation runs the risk of errors, with the risk increasing with
the length and complexity of the computation. The following
precautions and crosschecks have been implemented to try to minimise the
risk of errors affecting our results.

\begin{itemize}\addtolength{\itemsep}{-0.5\baselineskip}

\item Data in all of the tables was computed at least twice.
  There was some common code used, most notably the generator of
  species representatives from \cite{MMM07} and the code for
  screening MOLS for isomorphism. Both of these programs have been
  previously used for multiple tasks, reducing the likelihood that
  bugs would have been undetected. With the caveat that this code was
  common, the main computations were performed independently. For example,
  both authors found their own versions of the set $\Gamma_2$, which were
  then compared to check that each set contained the same species of MOLS.

\item After we generated our catalogues, Brendan McKay kindly gave us
  code he had written for canonically labelling MOLS and
  calculating their autoparatopism and autotopism groups. With this
  code we were able to verify that MOLS in our catalogues of
  representatives really were from distinct species or isotopism
  classes, as appropriate. We also checked that our code agreed with
  his on all group sizes, including those in \Tref{tbl:pairs9bysym}.

\item We found \maxMOLS{k}{n} exist exactly when the prior literature
(see \sref{s:intro}) said they should.

\item The number of species of \MOLS{2}{n} had previously been
  computed by Brendan McKay \cite{BDMLS} for $n\le8$. His results agree
  with ours in \Tref{tbl:numMOLS} and \Tref{tbl:numMOLSlist}. 

\item Norton \cite{Nor39} manually enumerated lists of MOLS of order
  7. His enumeration of species of latin squares of order 7 was
  incomplete, but the single species that he missed contains bachelor
  latin squares, so this did not affect his results on MOLS. His
  values for the number of species, isotopism classes and reduced
  latin squares agree with ours in \Tref{tbl:numMOLSlist} for $2\le
  k<7=n$. He also calculated that $\AL_{2,7}=6263668776960000$, which
  agrees with the value given by \tref{t:switch} from our value of
  $\RL_{2,7}$.

\item A number of our computations confirm results obtained by Owens
  and Preece for sets of \MOLS{8}{9}. It was reported in
  \cite{OP95} that there are 19 isotopism classes (in 7
  species) of sets of \MOLS{8}{9}. This agrees with our results in the
  final line of \Tref{tbl:nummaxMOLS} and \Tref{tbl:numMOLS}.  Also,
  the last column of \Tref{tbl:numspeciesinv} tallies with
  \cite[Table~4]{OP97}.

\item The total of the $\chi$ column in \Tref{tbl:pairs9data}
  evaluated to a multiple of four, as it should. If that total had
  been corrupted by one or more errors, it is quite likely that
  the result would not be divisible by $4$.

\item For each $n$, the number of isotopism classes of sets of \MOLS{2}{n}
equals the number of trisotopism classes of lists of \MOLS{2}{n}. A similar
equality holds if attention is restricted to \maxMOLS{2}{n}. Thus there
are several cases of equinumerous objects being counted in
Tables~\ref{tbl:nummaxMOLS} to \ref{tbl:numMOLSlist}.
The reason can be seen by considering the corresponding orthogonal arrays
and which operations result in equivalence. For isotopism of sets we allow the
last two columns of the orthogonal array to be exchanged, whereas for 
trisotopism of lists we allow the first two columns to be exchanged.
In other respects the two cases are identical. Hence, reversing the order
of the columns of the orthogonal arrays provides a bijection between
the objects that we claimed are equinumerous.

\item Hicks et al.~\cite{HMSV} show that there are exactly $(p^d-2)!/d$
reduced sets of \MOLS{(p^d-1)}{p^d} that define the Desarguesian 
projective plane of order $p^d$. Our computations agreed for $p^d\le9$.

\item The method outlined in \sref{s:pairs9} for counting the pairs of
  MOLS of order 9 was replicated for order 8 and agreed with the results
  of our more direct computations. Smaller orders do not provide
  useful test cases, since there are no rigid MOLS of order $n\le7$.


\end{itemize}

Data from our enumerations is available online at \cite{WWWW}, 
including species representatives for the MOLS that we generated.

\section{Order 10}\label{s:ten}
 
For orders 10 and higher there are simply too many latin squares to
attempt the sorts of comprehensive enumerations of the sort undertaken
in the previous sections. However, given the tremendous interest in
the existence or otherwise of a triple of MOLS of order 10 (see
\cite{MMM07} and the references therein), we did use our programs to
investigate the latin squares with autoparatopism groups of order at
least 3. A catalogue of these squares was produced by the authors of
\cite{MMM07}.  It was already established in \cite{MMM07} that none of
these squares is in any triple of MOLS. However, some of them come
much closer than any previously known examples, as we
discovered. Consider the following three squares
\[\setlength{\arraycolsep}{0.25\arraycolsep}
A=\left[\begin{array}{cccccccccc}
0&8&9&7&5&6&4&2&3&1\\
9&1&4&6&2&7&3&8&0&5\\
7&4&2&5&1&3&8&6&9&0\\
8&6&5&3&9&2&1&0&4&7\\
6&2&1&8&4&0&9&5&7&3\\
4&9&3&2&7&5&0&1&6&8\\
5&3&7&1&0&8&6&9&2&4\\
3&5&0&9&8&4&2&7&1&6\\
1&7&6&0&3&9&5&4&8&2\\
2&0&8&4&6&1&7&3&5&9\\
\end{array}
\right],\quad
B=\left[\begin{array}{cccccccccc}
0&7&8&9&1&2&3&4&5&6\\
9&0&6&1&8&3&2&5&4&7\\
7&2&0&4&3&9&1&8&6&5\\
8&5&3&0&2&1&7&6&9&4\\
6&9&5&3&0&7&4&2&1&8\\
4&1&7&6&5&0&8&9&3&2\\
5&4&2&8&9&6&0&3&7&1\\
3&6&1&7&4&8&5&0&2&9\\
1&8&4&2&6&5&9&7&0&3\\
2&3&9&5&7&4&6&1&8&0\\
\end{array}
\right],\quad
C=\left[\begin{array}{cccccccccc}
0&7&8&9&1&2&3&4&5&6\\
6&4&2&8&9&5&1&3&7&0\\
4&9&5&3&2&7&6&0&1&8\\
5&1&7&6&4&3&8&9&0&2\\
3&2&9&0&7&1&5&6&8&4\\
1&0&3&7&6&8&2&5&4&9\\
2&8&0&1&3&4&9&7&6&5\\
9&5&4&2&8&6&0&1&3&7\\
7&3&6&5&0&9&4&8&2&1\\
8&6&1&4&5&0&7&2&9&3\\
\end{array}
\right].
\]
Square $A$ is orthogonal to both $B$ and $C$. When $B$ and $C$ are overlayed,
91 different pairs are produced out of a possible 100. Moreover, the only
duplicated pairs involve symbols 7,8,9 in $C$. We conclude that $A$ and $B$
have 7 common transversals. The previously best published result showed a pair of MOLS of order 10 with 4 common transversals \cite{BrownHedayatParker1993}.

Note that $A$ is semisymmetric and $A$, $B$ and $C$ all have the
automorphism $(0)(123)(456)(789)$.

\bigskip

\noindent
\textbf{Acknowledgments.} 
This research was supported in part by an Australian Mathematical
Society Lift-Off Fellowship and by ARC grant FT110100065.
Computations were performed mainly on the Monash Sun Grid and Monash
Green SPONGE computing facilities.  The authors are deeply indebted to
Darcy Best for carefully checking a number of the tables.  The authors
are also grateful to Brendan McKay for making available his code for
canonically labelling orthogonal arrays, and to Patric Osterg{\aa}rd
and Petteri Kaski for helpful discussions.  We would also like to
thank Wendy Myrvold for independently confirming the results of
Section~\ref{s:ten} and for providing the program to generate species
representatives from~\cite{MMM07}.

\bibliographystyle{plain}
  \let\oldthebibliography=\thebibliography
  \let\endoldthebibliography=\endthebibliography
  \renewenvironment{thebibliography}[1]{%
    \begin{oldthebibliography}{#1}%
      \setlength{\parskip}{0.2ex plus 0.1ex minus 0.1ex}%
      \setlength{\itemsep}{0.2ex plus 0.1ex minus 0.1ex}%
  }%
  {%
    \end{oldthebibliography}%
  }

\vskip-6pt
\vskip-1pt

\end{document}